\newtheorem{theorem}{Theorem}
\newtheorem{lemma}[theorem]{Lemma}
\newtheorem{corollary}[theorem]{Corollary}
\def\barr{\begin{array}}
\def\earr{\end{array}}
\title{A note on the Chermak-Delgado lattice of a finite group}
\author{Marius T\u arn\u auceanu}
\date{December 9, 2016}
\begin{document}

\maketitle

\begin{abstract}
    In this note we describe the structure of finite groups $G$ whose Chermak-Delgado lattice is the interval $[G/Z(G)]=\{H\in L(G) \mid Z(G)\leq H\leq G\}$.
\end{abstract}

{\small
\noindent
{\bf MSC2000\,:} Primary 20D30; Secondary 20D15, 20E34.

\noindent
{\bf Key words\,:} Chermak-Delgado measure, Chermak-Delgado lattice, centralizer lattice, subgroup lattice.}

\section{Introduction}

Let $G$ be a finite group and $L(G)$ be the subgroup lattice of $G$. The \textit{Chermak-Delgado measure} of a subgroup $H$ of $G$ is defined by
\begin{equation}
m_G(H)=|H||C_G(H)|.\nonumber
\end{equation}Let
\begin{equation}
m(G)={\rm max}\{m_G(H)\mid H\leq G\} \mbox{ and } {\cal CD}(G)=\{H\leq G\mid m_G(H)=m(G)\}.\nonumber
\end{equation}Then the set ${\cal CD}(G)$ forms a modular self-dual sublattice of $L(G)$,
which is called the \textit{Chermak-Delgado lattice} of $G$. It was first introduced by Chermak and Delgado \cite{5}, and revisited by Isaacs \cite{7}. In the last years
there has been a growing interest in understanding this lattice, especially for $p$-groups (see e.g. \cite{1,2,3,10}). The study can be naturally extended to nilpotent groups,
since by \cite{2} the Chermak-Delgado lattice of a direct product of finite groups decomposes as the direct product of the Chermak-Delgado lattices of the factors. Recall also that
if $H\in {\cal CD}(G)$, then $C_G(H)\in {\cal CD}(G)$ and $C_G(C_G(H))=H$. This implies that ${\cal CD}(G)$ is contained in the centralizer lattice $\mathfrak{C}(G)$ of $G$.

We remark that ${\cal CD}(G)=[G/Z(G)]$ for many finite groups $G$, such us $D_8$, $Q_8$, any abelian group, ... and so on. Thus, the study of finite groups satisfying this property is very natural. It is the goal of the current note. Our main result is stated as follows.

\begin{theorem}\label{th:C1}
    Let $G$ be a finite group. Then ${\cal CD}(G)=[G/Z(G)]$ if and only if $G=G_1\times\cdots\times G_r\times A$, where
    \begin{equation}
    \gcd(|G_i|,|G_j|)=1=\gcd(|G_i|,|A|) \mbox{ for all } i\neq j,\nonumber
    \end{equation}$A$ is an abelian group, and every $G_i$ is a $p$-group satisfying
    \begin{equation}
    [G_i/Z(G_i)] \mbox{ is modular and } G'_i \mbox{ is cyclic}.
    \end{equation}
\end{theorem}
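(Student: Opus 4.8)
The plan is to convert the identity $\mathcal{CD}(G)=[G/Z(G)]$ into a statement purely about conjugacy classes, then reduce to $p$-groups, and finally treat the $p$-group case using the structure of modular $p$-groups.

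\smallskip
\noindent\emph{Step 1: a reformulation.} First I would record that $\mathcal{CD}(G)\subseteq[G/Z(G)]$ always holds: if $H\in\mathcal{CD}(G)$ then $C_G(HZ(G))=C_G(H)$, so $m_G(HZ(G))\ge m_G(H)=m(G)$, forcing $HZ(G)=H$, i.e.\ $Z(G)\le H$. Since $\mathcal{CD}(G)$ is a sublattice of $L(G)$ it is closed under joins, and every $H$ with $Z(G)\le H\le G$ is the join of the subgroups $\langle x,Z(G)\rangle$ with $x\in H$; hence $\mathcal{CD}(G)=[G/Z(G)]$ if and only if $\langle x,Z(G)\rangle\in\mathcal{CD}(G)$ for every $x\in G$. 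Evaluating the measure gives $m_G(\langle x,Z(G)\rangle)=|Z(G)|\cdot o(xZ(G))\cdot|C_G(x)|$, while $m(G)=m_G(G)=|G|\,|Z(G)|$, so the condition becomes the key equivalence
\[
\mathcal{CD}(G)=[G/Z(G)]\quad\Longleftrightarrow\quad |x^G|=o\bigl(xZ(G)\bigr)\ \text{ for every }x\in G.\qquad(\star)
\]

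\smallskip
\noindent\emph{Step 2: reduction to $p$-groups.} Assume $(\star)$. From the standard inequality $m_G(H)m_G(K)\le m_G(H\cap K)\,m_G(\langle H,K\rangle)$: if $H,K$ both contain $Z(G)$ then all four measures equal $m(G)$, so equality holds throughout, and in particular $|H\cap K|\,|\langle H,K\rangle|=|H|\,|K|$, i.e.\ $HK=\langle H,K\rangle$. Thus every two subgroups of $G/Z(G)$ permute; hence every subgroup of $G/Z(G)$ is permutable, so (Ore) subnormal, so the Sylow subgroups of $G/Z(G)$ are normal, whence $G/Z(G)$ — and therefore $G$ — is nilpotent. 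Write $G=P_1\times\cdots\times P_k$ as the product of its Sylow subgroups. By the direct product decomposition $\mathcal{CD}(G)=\prod_i\mathcal{CD}(P_i)$ and, by coprimality, $[G/Z(G)]=\prod_i[P_i/Z(P_i)]$, so $(\star)$ holds for $G$ iff it holds for each $P_i$. Collecting the abelian Sylow subgroups into $A$ and relabelling the rest $G_1,\dots,G_r$, and noting that $\mathcal{CD}(H)=[H/Z(H)]$ forces $[H/Z(H)]$ to be a modular lattice (as $\mathcal{CD}(H)$ always is), the theorem is reduced to: for a $p$-group $P$, $\mathcal{CD}(P)=[P/Z(P)]$ if and only if $[P/Z(P)]$ is modular and $P'$ is cyclic.

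\smallskip
\noindent\emph{Step 3: the $p$-group case.} In the forward direction only the cyclicity of $P'$ needs proof, and the cleanest situation is when $P$ has class $\le 2$: then $y\mapsto[x,y]$ is a homomorphism $P\to P'$ with kernel $C_P(x)$ and image $[x,P]$, and $o(xZ(P))=\exp[x,P]$, so $(\star)$ says $|[x,P]|=\exp[x,P]$, i.e.\ $[x,P]$ is cyclic for every $x$. If $P'$ were non-cyclic, then working modulo $\Phi(P')$ one sees that for any $x,x'$ with distinct $1$-dimensional images of $[x,P],[x',P]$ in $P'/\Phi(P')$ the linear functionals describing these images must be proportional (because $[xx',P]$ is again cyclic), so they share a single kernel $M$ of index $p$ in $P$; then $[P,M]\le\Phi(P')$, and since $P=\langle M,z\rangle$ this yields $P'\le\Phi(P')$, a contradiction. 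For the converse one verifies $(\star)$ directly; when $P$ has class $\le 2$ this is immediate since $[x,P]$ is a subgroup of the cyclic group $P'$, hence $\exp[x,P]=|[x,P]|$. For the general case I would put $\bar P=P/Z(P)$ into Iwasawa normal form (an abelian normal subgroup with cyclic quotient acting by a power automorphism, or a Hamiltonian $2$-group), exploit the arithmetic consistency constraints of that form together with $(\star)$ applied to generators of $P$ to pin down the conjugacy classes of $P$, and conclude in both directions.

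\smallskip
\noindent\emph{The main obstacle.} Everything hinges on the $p$-group equivalence when the nilpotency class of $P$ exceeds $2$. There $y\mapsto[x,y]$ is not a homomorphism, the sets $x^{-1}x^P\subseteq P'$ are not subgroups, and the clean identity $o(xZ(P))=\exp[x,P]$ fails; one must couple the lattice-theoretic input (modularity of $P/Z(P)$, equivalently its Iwasawa structure) with the arithmetic of $(\star)$ to force $P'$ cyclic, and conversely to check $(\star)$. I expect this interaction — and, quite possibly, the task of showing that the stated hypotheses already force $P$ to have class at most $2$ — to absorb essentially all of the work.
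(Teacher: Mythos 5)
Your Steps 1 and 2 are essentially sound and take a genuinely different route from the paper: where the paper passes to the centralizer lattice and quotes Schmidt's Theorem 9.3.17 to obtain the coprime decomposition, you extract permutability of any two subgroups of $G/Z(G)$ from the equality case of $m_G(H)m_G(K)\le m_G(H\cap K)\,m_G(\langle H,K\rangle)$, deduce nilpotency, and then split along Sylow subgroups via the Brewster--Wilcox product theorem; that is a nice, more self-contained reduction. The problem is that the entire content of the theorem sits in the $p$-group equivalence (the paper's Lemma 5: for a $p$-group $P$, ${\cal CD}(P)=[P/Z(P)]$ iff $[P/Z(P)]$ is modular and $P'$ is cyclic), and your Step 3 does not prove it: you carry out both directions only when $P$ has class at most $2$ and leave the general case as a plan. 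Your hope that the hypotheses force class at most $2$ is unfounded: for odd $p$ the group $\langle a,b\mid a^{p^3}=b^{p^2}=1,\ a^b=a^{1+p}\rangle$ has cyclic derived subgroup $\langle a^p\rangle\cong\mathbb{Z}_{p^2}$, has $[G/Z(G)]$ modular (the quotient is an Iwasawa group of order $p^4$), and has class $3$. So the class-$\ge 3$ analysis cannot be avoided, and it is exactly the part the paper disposes of by citation (Schmidt's Theorem 9.3.19 for the forward direction, Cheng's Lemma 4 for the converse). As written, Step 3 is a gap, not a proof.

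A secondary issue is that your equivalence $(\star)$ is only justified in the forward direction. To pass from ``$|x^G|=o(xZ(G))$ for all $x$'' back to ${\cal CD}(G)=[G/Z(G)]$ you need $m(G)=|G||Z(G)|$, i.e.\ that no subgroup at all has measure exceeding $|G||Z(G)|$; this does not follow from evaluating the measure on the subgroups $\langle x,Z(G)\rangle$ alone, because $m(G)=m_G(G)$ holds only when $G\in{\cal CD}(G)$, which is part of what is being proved. The paper avoids this by showing (via Cheng's lemma) that \emph{every} $H\in[G/Z(G)]$ has measure $|G||Z(G)|$ and then using that $\emptyset\ne{\cal CD}(G)\subseteq[G/Z(G)]$, so the maximum is attained inside the interval. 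You should either argue the same way or supply a separate bound on $m_G(H)$ for arbitrary $H\le G$.
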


Note that the conditions (1) are equivalent with the conditions
\begin{equation}
    G'_i=\langle a\rangle \mbox{ is cyclic and } [\langle a\rangle,G_i]\leq\langle a^4\rangle
\end{equation}by Theorem 9.3.19 of \cite{9} (see also \cite{4,8}).
\bigskip

The following corollary is an immediate consequence of Theorem 1.

\begin{corollary}
    Every finite group $G$ satisfying ${\cal CD}(G)=[G/Z(G)]$ is nilpotent.
\end{corollary}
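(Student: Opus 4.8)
The plan is to read the statement straight off Theorem~\ref{th:C1}, which already supplies a complete structural description of the groups with $\mathcal{CD}(G)=[G/Z(G)]$. First I would invoke that theorem: assuming $\mathcal{CD}(G)=[G/Z(G)]$, it gives a decomposition $G=G_1\times\cdots\times G_r\times A$ in which $A$ is abelian, each $G_i$ is a finite $p_i$-group, and the orders $|G_1|,\dots,|G_r|,|A|$ are pairwise coprime. Splitting the abelian factor $A$ into its own Sylow subgroups, this exhibits $G$ as an internal direct product of groups of prime-power order belonging to pairwise distinct primes; since being the internal direct product of its Sylow subgroups is one of the textbook characterisations of nilpotency for a finite group, we conclude at once that $G$ is nilpotent.

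If one prefers not to invoke the Sylow characterisation, the same conclusion follows from three elementary facts: every finite $p$-group is nilpotent, every abelian group is nilpotent (of class at most $1$), and a finite direct product of nilpotent groups is nilpotent, with class the maximum of those of the factors. Applying the first to each $G_i$, the second to $A$, and then the third to $G_1\times\cdots\times G_r\times A$ yields the nilpotency of $G$. Note that the coprimality conditions play no role in this deduction — they are part of the characterisation in Theorem~\ref{th:C1}, but ``nilpotent'' is inherited by any direct product of nilpotent factors regardless of their orders.

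Because the argument is instantaneous once Theorem~\ref{th:C1} is available, there is no real obstacle here; the one point worth emphasising is that the proof genuinely uses Theorem~\ref{th:C1} and not merely the weaker consequence $G\in\mathcal{CD}(G)$ (equivalently $m(G)=|G|\,|Z(G)|$) of the hypothesis. Indeed there are non-nilpotent groups with $G\in\mathcal{CD}(G)$ — for instance $\mathrm{SL}(2,3)$, whose Chermak--Delgado lattice is the two-element chain $\{Z(G),G\}$, properly contained in $[G/Z(G)]$. A self-contained proof avoiding Theorem~\ref{th:C1} would thus have to exploit the full hypothesis that \emph{every} subgroup $H$ with $Z(G)\leq H\leq G$ satisfies $m_G(H)=m(G)$; feeding the subgroups $Z(G)P$ (for $P$ a Sylow subgroup of $G$) into that equality, together with the basic Chermak--Delgado inequality $m_G(\langle H,K\rangle)\,m_G(H\cap K)\geq m_G(H)\,m_G(K)$, one can force each Sylow $p$-subgroup of $G$ to be centralised by a full Sylow $q$-subgroup for every $q\neq p$, and would then still need to upgrade this to outright normality of the Sylow subgroups. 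That last upgrade is the only genuinely nontrivial part, and it is exactly what Theorem~\ref{th:C1} lets us bypass — which is why the statement appears here as a corollary rather than as a theorem in its own right.
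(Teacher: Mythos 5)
Your proof is correct and follows exactly the route the paper intends: the paper labels this corollary an ``immediate consequence'' of Theorem~\ref{th:C1}, namely that the decomposition $G=G_1\times\cdots\times G_r\times A$ into $p$-groups and an abelian factor forces nilpotency. Your additional remarks (e.g.\ on $\mathrm{SL}(2,3)$ and on why the full hypothesis is needed) are accurate but supplementary; the core argument matches the paper's.
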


By Corollary 9.3.18 of \cite{9} (see also \cite{6}), we know that there are finite groups in which every subgroup is a centralizer. Theorem 1 shows that a similar result
does not hold for the Chermak-Delgado lattice.

\begin{corollary}
    There is no finite non-trivial group $G$ such that ${\cal CD}(G)=L(G)$.
\end{corollary}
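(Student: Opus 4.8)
The plan is a short proof by contradiction: I suppose that $G$ is a finite non-trivial group with ${\cal CD}(G)=L(G)$ and derive an impossibility by first pinning down $Z(G)$ and then invoking Theorem \ref{th:C1} (equivalently, the first corollary above).

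The first step is to evaluate $m(G)$. Since the trivial subgroup lies in $L(G)={\cal CD}(G)$, we get $m(G)=m_G(\{1\})=|C_G(\{1\})|=|G|$. The second step exploits that $Z(G)$ also lies in $L(G)={\cal CD}(G)$: then $m_G(Z(G))=|Z(G)|\,|C_G(Z(G))|=|Z(G)|\,|G|$ must also equal $m(G)=|G|$, which forces $|Z(G)|=1$. So under our assumption $G$ is centreless.

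With $Z(G)=\{1\}$ the interval $[G/Z(G)]$ is all of $L(G)$, so the hypothesis becomes ${\cal CD}(G)=[G/Z(G)]$; the third step is to apply Theorem \ref{th:C1} (or, directly, the first corollary above) to conclude that $G$ is nilpotent. But a non-trivial finite nilpotent group has non-trivial centre, contradicting $Z(G)=\{1\}$, and the proof is complete.

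I do not expect any genuine obstacle here: the only mildly delicate point is noticing in the first two steps that the equality ${\cal CD}(G)=L(G)$ squeezes the Chermak-Delgado measure of both $\{1\}$ and $Z(G)$ to the common value $m(G)=|G|$, which can happen only when the centre is trivial. After that, Theorem \ref{th:C1} does all the work, since it shows that any group with ${\cal CD}(G)=[G/Z(G)]$ is nilpotent, and nilpotency is incompatible with a trivial centre unless the group itself is trivial.
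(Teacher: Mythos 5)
Your proof is correct and follows essentially the route the paper intends: the paper gives no explicit argument, presenting the corollary as an immediate consequence of Theorem \ref{th:C1}, and your two preliminary steps (forcing $m(G)=|G|$ and hence $Z(G)=\{1\}$ from $\{1\},Z(G)\in{\cal CD}(G)$, so that $L(G)=[G/Z(G)]$) merely make explicit why the theorem applies before the nilpotency contradiction. Nothing further is needed.
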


Also, Theorem 1 shows that the property ${\cal CD}(G)=[G/Z(G)]$ is inherited by subgroups.

\begin{corollary}
    If\, $G$ is a finite group satisfying ${\cal CD}(G)=[G/Z(G)]$ and $H$ is a subgroup of $G$, then\, ${\cal CD}(H)=[H/Z(H)]$.
\end{corollary}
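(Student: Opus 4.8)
The plan is to read off the structure of $G$ from Theorem~\ref{th:C1}, check that $H$ has the same shape, and then apply the converse direction of Theorem~\ref{th:C1} to $H$. By Theorem~\ref{th:C1}, $G=G_1\times\cdots\times G_r\times A$ where $A$ is abelian, the orders $|G_1|,\dots,|G_r|,|A|$ are pairwise coprime, and each $G_i$ is a $p_i$-group with $[G_i/Z(G_i)]$ modular and $G_i'$ cyclic. In particular $G$ is nilpotent (Corollary~2), hence so is $H$, and $H$ is the direct product of its Sylow subgroups. Since $G_i$ is the unique Sylow $p_i$-subgroup of $G$, and for every prime $q\mid|A|$ the (unique) Sylow $q$-subgroup of $G$ is contained in $A$, the Sylow $p_i$-subgroup $P_i:=H\cap G_i$ of $H$ satisfies $P_i\leq G_i$, while the product of the remaining Sylow subgroups of $H$ is an abelian subgroup $B\leq A$. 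Thus $H=P_1\times\cdots\times P_r\times B$ with pairwise coprime orders, and by the ``if'' part of Theorem~\ref{th:C1} it suffices to prove that each $p_i$-group $P_i$ satisfies ``$[P_i/Z(P_i)]$ modular and $P_i'$ cyclic'' (a trivial $P_i$ simply being dropped from the product).

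That $P_i'$ is cyclic is clear, since $P_i'\leq G_i'$ and subgroups of a cyclic group are cyclic. The substantive point, and the main obstacle, is that $[P_i/Z(P_i)]$ must be shown modular, because modularity of a lattice need not pass to the analogous interval $[K/Z(K)]$ inside a subgroup $K$. I would get around this via the reformulation recorded just after Theorem~\ref{th:C1}: for a finite $p$-group $K$, the conjunction ``$[K/Z(K)]$ modular and $K'$ cyclic'' is equivalent, by Theorem~9.3.19 of \cite{9}, to ``$K'=\langle c\rangle$ is cyclic and $[\langle c\rangle,K]\leq\langle c^4\rangle$''. Since $P_i$ is a $p_i$-group, it suffices to verify this reformulated condition for $P_i$; the hypothesis gives it for $G_i$, say $G_i'=\langle a\rangle$ with $[\langle a\rangle,G_i]\leq\langle a^4\rangle$, and since $P_i'\leq\langle a\rangle$ we may write $P_i'=\langle b\rangle$ with $b=a^d$. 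The task thus reduces to showing $[\langle b\rangle,P_i]\leq\langle b^4\rangle$.

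This rests on the commutator identity $[a^k,x]=[a,x]^k$, valid for all $x\in G_i$ and all integers $k$ because $\langle a\rangle=G_i'$ is an abelian normal subgroup of $G_i$: expanding through $[xy,z]=[x,z]^y[y,z]$ and using that $a^{k-1}\in\langle a\rangle$ centralizes $[a,x]\in\langle a\rangle$ collapses the conjugations, after which induction on $k$ finishes it. Consequently $[\langle b\rangle,P_i]$ is generated by the elements $[b^s,v]=[a^{ds},v]=[a,v]^{ds}$ for $v\in P_i$, $s\in\Z$; each $[a,v]$ lies in $[\langle a\rangle,G_i]\leq\langle a^4\rangle$, so $[a,v]^{ds}\in\langle a^{4d}\rangle=\langle b^4\rangle$, whence $[\langle b\rangle,P_i]\leq\langle b^4\rangle$. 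This establishes the reformulated condition for $P_i$, so $[P_i/Z(P_i)]$ is modular, and together with the cyclicity of $P_i'$ the ``if'' direction of Theorem~\ref{th:C1} yields ${\cal CD}(H)=[H/Z(H)]$. One can alternatively bypass that direction by combining the decomposition ${\cal CD}(H)\cong\prod_i{\cal CD}(P_i)$ of \cite{2} with the parallel decomposition of $[H/Z(H)]$, reducing immediately to the case $r=1$; the commutator computation above stays at the heart of it.
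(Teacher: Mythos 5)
Your proof is correct and follows exactly the route the paper intends: the paper states this corollary as an immediate consequence of Theorem~\ref{th:C1}, and the reformulation (2) recorded right after that theorem (condition (1) being equivalent to ``$G_i'=\langle a\rangle$ cyclic and $[\langle a\rangle,G_i]\leq\langle a^4\rangle$'') is precisely the subgroup-inheritable form you invoke to get around the fact that modularity of $[G_i/Z(G_i)]$ does not visibly descend to $[P_i/Z(P_i)]$. Your commutator computation establishing $[\langle b\rangle,P_i]\leq\langle b^4\rangle$ simply fills in the detail the paper leaves to the reader.
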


Observe that if for a finite group $G$ we have ${\cal CD}(G)=[G/Z(G)]$, then $\mathfrak{C}(G)=[G/Z(G)]$ and so $\mathfrak{C}(G)={\cal CD}(G)$ is a modular lattice.
Moreover, its length $l$ must be even by Lemma 9.3.10 of \cite{9}. Elementary examples of such groups are all abelian groups for $l=0$, and $D_8$, $Q_8$ for $l=2$. 
A more general example is the following.

\bigskip\noindent{\bf Example.} Let $G$ be an extra-special group $G$ of order $p^{2n+1}$. Then 
\begin{equation}
    G/Z(G)\cong\mathbb{Z}_p^{2n} \mbox{ is modular and } G'\cong\mathbb{Z}_p \mbox{ is cyclic},\nonumber
\end{equation}and therefore ${\cal CD}(G)=[G/Z(G)]$ is a modular lattice of length $2n$.
\bigskip

Finally, we indicate a natural open problem concerning the above study.

\bigskip\noindent{\bf Open problem.} Describe the structure of finite groups $G$ such that ${\cal CD}(G)$ is an interval (not necessarily $[G/Z(G)]$) of $L(G)$.

\section{Proof of the main result}

We start by proving an auxiliary result.

\begin{lemma}
    Let $G$ be a finite $p$-group. Then ${\cal CD}(G)=[G/Z(G)]$ if and only if $[G/Z(G)]$ is modular and $G'$ is cyclic.
\end{lemma}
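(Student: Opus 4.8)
The plan is to recast the lattice equality as a numerical condition on the Chermak--Delgado measure and then treat the two implications separately. For the reduction: a centralizer always contains $Z(G)$, so one always has ${\cal CD}(G)\subseteq\mathfrak{C}(G)\subseteq[G/Z(G)]$; moreover $m_G(Z(G))=m_G(G)=|G||Z(G)|$, and if $Z(G)\not\leq H$ then $m_G(H)<m_G(HZ(G))$ because $HZ(G)$ strictly contains $H$ while $C_G(HZ(G))=C_G(H)$. It follows that ${\cal CD}(G)=[G/Z(G)]$ holds precisely when every $H$ with $Z(G)\leq H\leq G$ satisfies $m_G(H)=|G||Z(G)|$, equivalently $|C_G(H)|=|Z(G)|[G:H]$. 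Specializing to $H=\langle x,Z(G)\rangle$ turns this into $|x^G|=[G:C_G(x)]=o(xZ(G))$ for all $x\in G$; feeding it back with $H=C_G(x)$ gives $C_G(C_G(x))=\langle x,Z(G)\rangle$, which is abelian; and applying it to an abelian $H\geq Z(G)$ forces $|H|^2\leq m_G(H)=|G||Z(G)|$, so that every maximal abelian normal subgroup $A$ of $G$ (self-centralizing, $G$ being nilpotent) satisfies $|A|^2=|G||Z(G)|$. These are the tools I would use.

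For the direction ${\cal CD}(G)=[G/Z(G)]\Rightarrow$ (modular and cyclic): that $[G/Z(G)]$ is modular is immediate, since ${\cal CD}(G)$ is always a modular lattice. The real content is that $G'$ is cyclic. I would derive this from the reduction: the identities $|x^G|=o(xZ(G))$ and $C_G(C_G(x))=\langle x,Z(G)\rangle$, the abelianity and size bound on the subgroups $\langle x,Z(G)\rangle$, and the fact that $G/Z(G)$ (isomorphic as a lattice to ${\cal CD}(G)$, hence modular, hence constrained by Iwasawa's classification) should together be incompatible with a non-cyclic derived subgroup. Concretely, I would pass to a minimal counterexample --- a $p$-group with non-cyclic $G'$ in which every proper subgroup already has the property and hence cyclic derived subgroup --- and run a case analysis to reach a contradiction. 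I expect this step, forcing $G'$ to be cyclic, to be the main obstacle; everything else is bookkeeping.

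For the converse, assume $[G/Z(G)]$ modular and $G'$ cyclic; one must verify $|C_G(H)|=|Z(G)|[G:H]$ for every $H$ with $Z(G)\leq H\leq G$. The plan is to combine the structure theory of $p$-groups with cyclic commutator subgroup with Iwasawa's description of the modular group $G/Z(G)$ (as in \cite{9}) to bring $G$ into a normal form in which $G'=\langle a\rangle$ is cyclic with $[G',G]\leq\langle a^4\rangle$ --- essentially a central product --- and then to compute the centralizers of the subgroups above $Z(G)$ directly. Both hypotheses are genuinely needed: for $G=D_{16}$ the derived subgroup is cyclic but $G/Z(G)\cong D_8$ is not modular, and there $m_G(\langle r,Z(G)\rangle)>|G||Z(G)|$, so ${\cal CD}(G)\neq[G/Z(G)]$. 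For odd $p$ the constraint $[G',G]\leq\langle a^4\rangle$ is vacuous, while for $p=2$ it genuinely bites; this is where the two cases diverge.
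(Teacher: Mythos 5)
Your reduction of the lattice equality to the numerical condition $m_G(H)=|G||Z(G)|$ for all $H\in[G/Z(G)]$ is correct (and makes explicit a step the paper leaves implicit), and your remark that modularity of $[G/Z(G)]$ follows at once from modularity of ${\cal CD}(G)$ is a legitimate shortcut. But both substantive implications are left as plans rather than arguments, so the proposal has genuine gaps. In the forward direction, the assertion that $G'$ is cyclic is exactly the part you defer to ``a minimal counterexample and a case analysis'' and never carry out; that is the hard content of this direction, not bookkeeping. The paper does not prove it from scratch either: it notes that ${\cal CD}(G)=[G/Z(G)]$ together with the chain ${\cal CD}(G)\subseteq\mathfrak{C}(G)\subseteq[G/Z(G)]$ (which you already wrote down) forces $\mathfrak{C}(G)=[G/Z(G)]$, and then invokes Theorem 9.3.19 of \cite{9}, which says a finite $p$-group has $\mathfrak{C}(G)=[G/Z(G)]$ if and only if $[G/Z(G)]$ is modular and $G'$ is cyclic. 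You have all the ingredients for this reduction in hand but instead head toward re-proving the Schmidt/Gasch\"{u}tz--Reuther--Cheng classification, which is a substantial theorem in its own right.

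The converse has the same problem: ``compute the centralizers of the subgroups above $Z(G)$ directly'' is the entire difficulty, and the normal form $G'=\langle a\rangle$ with $[G',G]\leq\langle a^4\rangle$ does not by itself produce the identity $|H||C_G(H)|=|G||Z(G)|$ for every $H\in[G/Z(G)]$. The paper obtains this from Lemma 4 of \cite{4}, applied after Theorem 9.3.19 of \cite{9} has converted the hypotheses into $\mathfrak{C}(G)=[G/Z(G)]$; combined with your (correct) reduction, that constancy of the measure on the interval finishes the proof. Your $D_{16}$ sanity check and the auxiliary identities ($C_G(C_G(x))=\langle x,Z(G)\rangle$, $|x^G|=o(xZ(G))$, the bound for maximal abelian subgroups) are all valid consequences of the hypothesis, but none of them closes either gap; as written, the two directions of the lemma remain unproved.
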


\begin{proof}
   If ${\cal CD}(G)=[G/Z(G)]$, then $\mathfrak{C}(G)=[G/Z(G)]$ and so $[G/Z(G)]$ is modular and $G'$ is cyclic by Theorem 9.3.19 of \cite{9}.

   Conversely, if $[G/Z(G)]$ is modular and $G'$ is cyclic, then $\mathfrak{C}(G)=[G/Z(G)]$. By Lemma 4 of \cite{4} we infer that
   \begin{equation}
    m_G(H)=|H||C_G(H)|=|G||Z(G)|, \forall\, H\in [G/Z(G)],\nonumber
   \end{equation}that is all subgroups in $[G/Z(G)]$ have the same Chermak-Delgado measure. This shows that ${\cal CD}(G)=[G/Z(G)]$, as desired.
\end{proof}

\noindent{\bf Remark.} Let $G$ be a non-abelian $p$-group of order $p^n$ satisfying ${\cal CD}(G)=[G/Z(G)]$. If $G$ contains an abelian subgroup 
$M$ of order $p^{n-1}$ (as it happens for $D_8$ and $Q_8$), then $(G:Z(G))=p^2$. 

Indeed, we have $M\subseteq C_G(M)$ because $M$ is abelian and thus
\begin{equation}
    |G||Z(G)|=m(G)=|M||C_G(M)|\geq |M|^2.\nonumber
\end{equation}One obtains
\begin{equation}
    p^2\leq(G:Z(G))\leq(G:M)^2=p^2,\nonumber
\end{equation}that is $(G:Z(G))=p^2$. 
\bigskip

We are now able to prove our main theorem.

\begin{proof}[Proof of Theorem \ref{th:C1}]
    Assume first that $G=G_1\times\cdots\times G_r\times A$, where $G_i$, $i=1,...,r$, and $A$ satisfy the conditions in Theorem 1. Then
    \begin{equation}
    {\cal CD}(G_i)=[G_i/Z(G_i)], \forall\, i=1,...,r,\nonumber
    \end{equation}by Lemma 5. It follows that
    \begin{align*}
    {\cal CD}(G)
    &={\cal CD}(G_1)\times\cdots\times{\cal CD}(G_r)\times\{A\}\\
    &=[G_1/Z(G_1)]\times\cdots\times[G_r/Z(G_r)]\times\{A\}\\
    &=[G/Z(G)].\nonumber
    \end{align*}

    Conversely, assume that ${\cal CD}(G)=[G/Z(G)]$. Since ${\cal CD}(G)\subseteq \mathfrak{C}(G)$, we infer that $\mathfrak{C}(G)=[G/Z(G)]$. Then Theorem 9.3.17 of \cite{9} implies that
    $G=G_1\times\cdots\times G_r\times A$, where
    \begin{equation}
    \gcd(|G_i|,|G_j|)=1=\gcd(|G_i|,|A|) \mbox{ for all } i\neq j,\nonumber
    \end{equation}$A$ is an abelian group, and every $G_i$ is either a $\{p,q\}$-group with $|G_i/Z(G_i)|=pq$ or a $p$-group satisfying
    $\mathfrak{C}(G_i)=[G_i/Z(G_i)]$, $p$ and $q$ primes. Clearly, this leads to
    \begin{equation}
    {\cal CD}(G)={\cal CD}(G_1)\times\cdots\times{\cal CD}(G_r)\times\{A\}\nonumber
    \end{equation}and
    \begin{equation}
    [G/Z(G)]=[G_1/Z(G_1)]\times\cdots\times[G_r/Z(G_r)],\nonumber
    \end{equation}implying that 
    \begin{equation}
    {\cal CD}(G_i)=[G_i/Z(G_i)], \forall\, i=1,...,r.\nonumber
    \end{equation}If $G_i$ would be a $\{p,q\}$-group with $|G_i/Z(G_i)|=pq$ and $p<q$, then ${\cal CD}(G_i)$ would consists only of the unique subgroup of index $p$ contained in $[G_i/Z(G_i)]$, a contradiction. Consequently, $G_i$ is a $p$-group. On the other hand, it satisfies the conditions (1) by Theorem 9.3.19 of \cite{9}. This completes the proof.
\end{proof}

\vspace*{5ex}\small

\hfill
\begin{minipage}[t]{5cm}
Marius T\u arn\u auceanu \\
Faculty of  Mathematics \\
``Al.I. Cuza'' University \\
Ia\c si, Romania \\
e-mail: {\tt tarnauc@uaic.ro}
\end{minipage}

\end{document}